\theoremstyle{plain}
\newtheorem{Theorem}{Theorem}[section]
\newtheorem{Definition}[Theorem]{Definition}
\newtheorem{Lemma}[Theorem]{Lemma}
\newtheorem{Algorithm}[Theorem]{Algorithm}
\newcommand{\sgn}{{\rm sgn}}
\begin{document}

\title[A simple algorithm to compute link polynomials]{A simple algorithm to compute link polynomials defined by using skein relations}


\author{Xuezhi Zhao}
\address{Department of Mathematics  \& Institute of mathematics and interdisciplinary
science, Capital Normal University, Beijing 100048, CHINA}
\email{zhaoxve@mail.cnu.edu.cn}
\thanks{The authors are supported by  National
Natural Science Foundation of China (Grant No. 10931005)}

\keywords{Link;  polynomial; braid representative}%
\subjclass[2010]{57M25}

\maketitle

\begin{abstract}
We give a simple and practical algorithm to compute the link polynomials, which are defined according to the skein relations. Our method is based on a new total order on the set of all braid representatives. As by-product a new complete link invariant are obtained.
\end{abstract}

\newcommand{\brp}[3]{\sigma_{#1_{#2}}^{\epsilon'_{#2}}\cdots \sigma_{#1_{#3}}^{\epsilon'_{#3}}}
\newcommand{\br}[3]{\sigma_{{#1}_{#2}}^{\epsilon_{#2}}\cdots \sigma_{{#1}_{#3}}^{\epsilon_{#3}}}
\newcommand{\brord}{<_{br}}

\section{Introduction}

Link polynomials are important topological invariants to distinguish links and knots. Many efforts were made to give more effective methods to calculate them (see\cite{Kauffman, book}). It is known that computing the Jones polynomial is generally $\sharp P$- hard \cite{W}, and hence it is expected to
require exponential time in the worst case.

As we know, many link polynomials can be defined by using the so-called skein relation. For instance, HOMFLY polynomial $P(\cdot)$ (see \cite{HOMFLY}), which contains the information of Alexander polynomial, Conway polynomial, Jones polynomial, and etc.,
 could be obtained inductively as follows:
\begin{equation}\label{skein-relation}
\begin{array}{l}
P(\mbox{unknot}) =1, \\
\ell\, P(L_+) +\ell^{-1}P(L_-) +m P(L_0) =0,\ \ \ \ \mbox{(skein relation)}
\end{array}
\end{equation}
where $L_+$, $L_-$ and  $L_0$ are three link diagrams which are different only on a local region, as indicated in the following figures.
\begin{equation}\label{3L}
\setlength{\unitlength}{0.7mm}
\mbox{
\ \ \ \ $L_+$:\ \
\begin{picture}(14,14)(0,-2)

\qbezier(6.79,5.89)(10,7.5)(10,10)
\qbezier(3.21,5.89)(0,7.5)(0,10)
\qbezier(3.21,4.11)(0,2.5)(0,0)
\qbezier(6.79,4.11)(10,2.5)(10,0)
\qbezier(3.21,5.89)(5,5)(6.79,4.11)
\put(0,0){\vector(0,-1){3}}
\put(10,0){\vector(0,-1){3}}
\end{picture}
\ \ \ \ \ $L_-$:\ \
\begin{picture}(14,14)(0,-2)
\qbezier(6.79,5.89)(10,7.5)(10,10)
\qbezier(3.21,5.89)(0,7.5)(0,10)
\qbezier(3.21,4.11)(0,2.5)(0,0)
\qbezier(6.79,4.11)(10,2.5)(10,0)
\qbezier(3.21,4.11)(5,5)(6.79,5.89)
\put(0,0){\vector(0,-1){3}}
\put(10,0){\vector(0,-1){3}}
\end{picture}
\ \ \ \ $L_0$:\ \
\begin{picture}(14,14)(0,-2)
\qbezier(10,0)(4,5)(10,10)
\qbezier(0,0)(6,5)(0,10)
\put(3,3){\vector(-1,-1){3}}
\put(7,3){\vector(1,-1){3}}
\end{picture}
}
\end{equation}

In this paper, we shall provide a simple algorithm to calculate link polynomials, if these polynomials are defined by using skein relations. Links are considered as closed braids, and hence are oriented by from top to bottom orientation on braids. Our reduction is based on a new total order of the set of all braid representatives.

\section{Braid group and an order of braid representatives}

The Artin $n$-strands braid group $B_n$ has classical generators $\sigma_1, \sigma_2, \ldots \sigma_{n-1}$,
and two types of relations:
\begin{equation}\label{relation1}
\sigma_i\sigma_j = \sigma_j\sigma_i \ \ \ \ \mbox{for }\ |i-j|>1,
\end{equation}
\begin{equation}\label{relation2}
\sigma_i\sigma_{i+1}\sigma_i = \sigma_{i+1}\sigma_i\sigma_{i+1} \ \ \ \ \mbox{for }\ j=1,2,\ldots, n-2.
\end{equation}
Geometrically, elements in $B_n$ can be regarded as $n$ strings, the product of two braids is a joining from top to bottom.
 Each generator $\sigma_j$ is given as follow.
\begin{center}
\setlength{\unitlength}{1.3mm}
\begin{picture}(55,16)(-20,0)
\put(-5,12){\makebox(0,0)[cb]{$j\!\!-\!\!1$}}
\put(5,12){\makebox(0,0)[cb]{$j\!\!+\!\!1$}}
\put(0,12){\makebox(0,0)[cb]{$j$}}
\put(-20,12){\makebox(0,0)[cb]{$1$}}
\put(25,12){\makebox(0,0)[cb]{$n$}}
\put(-12.5,5){\makebox(0,0)[cc]{$\cdots$}}
\put(17.5,5){\makebox(0,0)[cc]{$\cdots$}}
\put(-20,0){\line(0,1){10}}
\put(-5,0){\line(0,1){10}}
\put(10,0){\line(0,1){10}}
\put(25,0){\line(0,1){10}}
\qbezier(3.21,5.71)(5,7.5)(5,10)
\qbezier(1.79,5.71)(0,7.5)(0,10)
\qbezier(1.79,4.29)(0,2.5)(0,0)
\qbezier(3.21,4.29)(5,2.5)(5,0)
\qbezier(1.79,5.71)(2.5,5)(3.21,4.29)
\end{picture}
\end{center}

We shall write arrays of the form $[n; b_1, \ldots, b_k]$  for the braid representatives. Here, each $b_j$ is a non-zero integer with $|b_j|<n$. The array $[n; b_1, \ldots, b_k]$ indicates the element $\sigma_{|b_1|}^{\sgn(b_1)}\cdots\sigma_{|b_k|}^{\sgn(b_k)}$ in $B_n$.

\begin{Definition}
Given a braid representative $\beta=[n; b_1,\ldots, b_k]$, the $m$-th weight $w_m(\beta)$ of $\beta$ is defined to the number $\sharp \{b_j \mid |b_j| = m \}$ of indices $b_j$ in the representative $\beta$ having absolute value $m$.
\end{Definition}

All weights of a given braid representative are zero but finitely many ones. Explicitly, $w_i([n; b_1, \ldots, b_k])=0$ for a braid representative of elements in $B_n$ if $i\ge n$.
 By using these weights, we can define an total order on all braid representatives as follows.
\begin{Definition}\label{new-order}
Let $\beta=[n; b_1,\ldots, b_k]$ and $\gamma=[m; c_1,\ldots, c_l]$ be two braid representatives. We say that $\beta$ is smaller that $\gamma$, denoted $\beta \brord \gamma$, if one of the following conditions is satisfied:

(1) $k<l$;

(2) $k=l$, $n<m$;

(3) $k=l$, $n=m$, and there is an integer $p$ such that $w_i(\beta)=w_i(\gamma)$ for $i=1,2,\ldots, p-1$ and $w_p(\beta)<w_p(\gamma)$;

(4) $k=l$, $n=m$, $w_i(\beta)=w_i(\gamma)$ for all $i$, and there is an integer $q$ such that $|b_j|=|c_j|$ for $j=1,2,\ldots, q-1$ and $|b_q|<|c_q|$;

(5) $k=l$, $n=m$, $w_i(\beta)=w_i(\gamma)$ for all $i$, $|b_j|=|c_j|$ for $j=1,2,\ldots, k$, and there is an integer $q$ such that $b_j=c_j$ for $j=1,2,\ldots, q-1$ and $b_q<c_q$ (i.e. $b_q=-c_q<0$).
\end{Definition}

It is well-known that each link can be considered as a closed braid (see \cite{BirmanBook}). Clearly, with respect to the order $\brord$, the set of all braid representatives turns out to be a total order set. For any given braid representative $\beta$, there are finitely many braid representatives which are smaller than $\beta$.
 Hence, we have
\begin{Theorem}
Each link has a unique minimal braid representative according to the order $\brord$. Hence the minimal braid representative is a complete link invariant.
\end{Theorem}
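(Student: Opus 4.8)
The plan is to upgrade the two properties of $\brord$ already recorded above — that it is a total order, and that each braid representative has only finitely many $\brord$-smaller elements — to the statement that $\brord$ is a \emph{well-order} on the set of all braid representatives, and then to read off the minimal representative of a fixed link.

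First I would fix a link $L$ and let $S_L$ be the set of all braid representatives whose closure is (isotopic to) $L$; this set depends only on the isotopy type of $L$. By Alexander's theorem (\cite{BirmanBook}) $L$ is a closed braid, so $S_L\neq\emptyset$. Next I would check that $\brord$ is a well-order: given any non-empty set $T$ of braid representatives, pick $\beta\in T$; the set of $\gamma\in T$ with $\gamma\brord\beta$, together with $\beta$ itself, is a finite non-empty subset of the totally ordered set of all braid representatives, hence has a $\brord$-least element $\beta_0$. One then verifies that $\beta_0$ is least in all of $T$: if $\gamma\in T$ does not lie in that finite subset, then $\gamma\neq\beta$ and $\gamma\not\brord\beta$, so $\beta\brord\gamma$ by totality, whence $\beta_0\brord\gamma$ (or $\beta_0=\beta\brord\gamma$).

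Applying this with $T=S_L$ gives a $\brord$-least braid representative $\mu(L)\in S_L$. Its uniqueness is exactly the trichotomy of the strict total order $\brord$: if $\beta$ and $\gamma$ are both least then neither is $\brord$ the other, and inspecting Definition~\ref{new-order} shows that the failure of all of (1)--(5) in both directions forces $\beta$ and $\gamma$ to coincide as arrays. Thus $L\mapsto\mu(L)$ is a well-defined function on link types. It is a \emph{complete} invariant because $L$ can be reconstructed from $\mu(L)$: since $\mu(L)\in S_L$, the closure of the braid coded by $\mu(L)$ is $L$, so $\mu(L_1)=\mu(L_2)$ forces $L_1$ and $L_2$ to be closures of one and the same braid, i.e.\ $L_1=L_2$.

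Given the earlier statements, essentially all the substance has already been absorbed into the phrases ``total order set'' and ``finitely many braid representatives which are smaller than $\beta$''; the only point I would be careful about is that these refer to the \emph{combinatorial arrays} $[n;b_1,\ldots,b_k]$ rather than to the underlying braid group elements, so that both the finiteness-of-predecessors claim and the trichotomy used for uniqueness are genuinely available. With that understood, the proof is just the short well-ordering argument above.
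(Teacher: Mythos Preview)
Your proposal is correct and is exactly the approach the paper takes: the paper does not give a separate proof but simply records, immediately before the theorem, that every link is a closed braid, that $\brord$ is a total order, and that each braid representative has only finitely many $\brord$-predecessors, and then writes ``Hence, we have'' the theorem. Your well-ordering argument merely spells out this ``hence'' carefully, so there is nothing to add.
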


Looking for orders on set of all braids is also an interesting topic (see \cite{De}). Our order ``$\brord$'' gives naturally a total order on set of all braids.  If we disregard the difference of braids at their weights in above definition, our definition coincides with the order introduced in \cite{Minimum Braids}. Our main improvement makes it possible to compute inductively link polynomial according to this. It seems that the order in \cite{Minimum Braids} does not work.

\section{An algorithm}
\def\ltl{{\rm ol}}

In this section, we shall give the key algorithm, showing the way to use the skein relation to make braid representatives smaller.

\begin{Definition}\label{eq}
An equivalence relation $\sim$ on the set of all braid representatives is defined to be one generated by following elementary relations:

(1) $[n; b_1,\ldots, b_k] \sim [n; b_2, b_1, b_3, \ldots, b_k ]$ if $||b_1|-|b_2||>1$;

(2) $[n; b_1,\ldots, b_k] \sim [n; \sgn(b_3)|b_2|, \sgn(b_2)|b_1|,\sgn(b_1)|b_2|, b_4, \ldots, b_k ]$ if $||b_1|-|b_2||=1$, $|b_1|=|b_3|$, but $\sgn(b_1)=-\sgn(b_2)=\sgn(b_3)$ does not hold;

(3) $[n; b_1,\ldots, b_k] \sim [n;  b_3, \ldots, b_k ]$ if $b_1=-b_2$;

(4) $[n; b_1,\ldots, b_k] \sim [n-1; b_2-\sgn(b_2), \dots, b_k-\sgn(b_k)]$ if  $|b_j|>|b_1|$ for $j=2,\ldots, k$;

(5) $[n; b_1,\ldots, b_k] \sim [n; b_2, \ldots, b_k, b_1]$.
\end{Definition}

From \cite[Corollary 2.3.1]{BirmanBook}, we obtain immediately that
\begin{Lemma}
Two braid representatives $\beta$ and $\gamma$ are equivalent if and only if corresponding closed braids $\hat\beta$ and $\hat\gamma$ are the same link.
\end{Lemma}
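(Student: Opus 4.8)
The plan is to show that the five elementary relations in Definition~\ref{eq} are exactly the moves generating the Markov equivalence of braids, so that the lemma follows directly from the cited Markov theorem in \cite[Corollary 2.3.1]{BirmanBook}. Recall that two braids have the same closure if and only if they are related by a finite sequence of (i) conjugation in $B_n$ and (ii) positive/negative (de)stabilization, i.e.\ passing between $\beta\in B_n$ and $\beta\sigma_n^{\pm1}\in B_{n+1}$. So the whole proof reduces to two implications: every elementary relation preserves the link type, and conversely every Markov move can be realized by a chain of elementary relations (together with the obvious fact that $\sim$, being an equivalence relation, is symmetric and transitive).

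For the first implication I would go through the five relations one by one. Relations (1) and (2) are, respectively, the far-commutation relation \eqref{relation1} and the braid relation \eqref{relation2} rewritten at the level of the first letters of the word (after using (5) to cycle any letter to the front); since these are relations in $B_n$ they do not change the braid, hence not the closure. Relation (3) is the free cancellation $\sigma_i\sigma_i^{-1}=1$, again an identity in $B_n$. Relation (5) is conjugation: $[n;b_1,\dots,b_k]$ and $[n;b_2,\dots,b_k,b_1]$ represent conjugate elements $\beta$ and $\sigma_{|b_1|}^{-\sgn(b_1)}\beta\,\sigma_{|b_1|}^{\sgn(b_1)}$, so their closures agree by Markov move (i). Relation (4) is the subtle one: the hypothesis $|b_j|>|b_1|$ for all $j\ge 2$ means that after cycling $b_1$ to the end (using (5)) the generator $\sigma_{|b_1|}$ occurs in the word exactly once and every other generator has strictly larger index; a geometric picture (or an explicit index shift) shows this is precisely a destabilization moving from $B_n$ down to $B_{n-1}$, with the reindexing $b_j\mapsto b_j-\sgn(b_j)$ accounting for the relabelling of the surviving strands. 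So relation (4) is a Markov destabilization and its inverse a stabilization.

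For the converse implication I must check that the generators of Markov equivalence lie in $\sim$. Conjugation by a single generator is relation (5) (composed with its inverse, and iterated), and conjugation by an arbitrary braid word is a composition of such single-generator conjugations, hence still in $\sim$. Stabilization $\beta\mapsto\beta\sigma_n^{\pm1}$ is the inverse of relation (4) in the special case where the new letter $b_1=\pm n$ really is the unique occurrence and is the smallest index — but one has to be slightly careful, since relation (4) as stated allows the unique-minimal letter to be any value $|b_1|$, not necessarily $n-1$; applying it to a word in $B_n$ whose minimal generator index is some $i<n-1$ produces a genuine Markov destabilization only after one notices that in a closed braid the strands carrying indices $>i$ that never interact with the rest can be "pulled off" one at a time, which is again a sequence of honest Markov moves. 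Finally one invokes that $\sim$ is the \emph{equivalence relation} generated by (1)--(5), so arbitrary finite compositions and inverses of these moves are allowed.

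The main obstacle is relation (4): matching the bookkeeping hypothesis "$|b_j|>|b_1|$ for $j=2,\dots,k$'' and the index shift $b_j\mapsto b_j-\sgn(b_j)$ against the clean topological statement of a Markov destabilization requires either a careful figure or a short combinatorial argument that an isolated lowest-index generator in a (cyclic) braid word corresponds to a trivial loop around one strand that can be removed. Everything else is a routine translation between the word-level moves (1)--(3),(5) and the group relations \eqref{relation1}, \eqref{relation2}, free reduction, and cyclic permutation. I would therefore spend the bulk of the write-up on relation (4) and dispatch the rest in a sentence each, citing \cite[Corollary 2.3.1]{BirmanBook} for the underlying Markov theorem.
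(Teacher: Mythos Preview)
Your approach---verifying that the five elementary relations generate exactly Markov equivalence and then invoking \cite[Corollary 2.3.1]{BirmanBook}---is precisely what the paper intends: the paper's own proof is nothing more than the single sentence ``From \cite[Corollary 2.3.1]{BirmanBook}, we obtain immediately that\ldots''.  So you are not taking a different route; you are filling in the details the paper omits, and your treatment of relations (1), (2), (3), (5) is correct.

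There is, however, a small inaccuracy in your handling of relation (4) and the converse direction.  Relation (4) is not a Markov destabilization at the \emph{top} strand but at the \emph{bottom}: the hypothesis says $|b_1|$ is the unique \emph{minimum}, so the single occurrence of $\sigma_{|b_1|}$ sits below all other generators, and the index shift $b_j\mapsto b_j-\sgn(b_j)$ records removing the lowest involved strand, not the highest.  (Your remark that ``strands carrying indices $>i$ that never interact with the rest can be pulled off'' has the inequality reversed; it is the strands with index below $|b_1|$ that are untouched, and they persist on both sides of (4).)  Consequently, in the converse direction, the standard Markov stabilization $\beta\mapsto\beta\sigma_n^{\pm1}$ is \emph{not} literally the inverse of a single application of (4); one must either invoke the strand-reversing automorphism $\sigma_i\mapsto\sigma_{n-i}$ of $B_n$ (which commutes with taking closures and converts top stabilization into bottom stabilization), or else exhibit an explicit chain of moves (1)--(5) realizing $[n+1;c_1,\dots,c_l,\pm n]\sim[n;c_1,\dots,c_l]$.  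Either fix is short, but as written your sketch does not quite supply it.
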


To calculate link polynomial by using skein relation, we shall convert a calculation of polynomial of a link given by a braid representative into those given by two simple braid representatives. Here, we need to find a ``good'' braid representative so that the two reduced braid representatives are both smaller than given one with respect to the order ``$\brord$''.
 To this end, we introduce a technical concept for braid representatives.
\begin{Definition}\label{leading-tag}
Given a braid representative $\beta=[n; b_1,\ldots, b_k]$, the {\em ordered leading tag length} $\ltl(\beta)$ of $\beta$ is a non-negative integer defined as follows:

(1) If $|b_1| \ne \min_{1\le j\le k}\{|b_j|\}$, then $\ltl(\beta)$ is $0$.

(2) If $|b_1| = \min_{1\le j\le k}\{|b_j|\}$, then $\ltl(\beta)$ is the maximal subscript $q$ such that $|b_j| = |b_1|+j-1$ for $j=1,2,\ldots, q$.
\end{Definition}

Now, we provide our key algorithm. In each step braid representatives decrease according to our order ``$\brord$'' in given equivalence classes. Meanwhile, ordered leading tag length is becoming longer.

\begin{Algorithm}\label{alg} (Simplify a braid representative of a link)

{\bf Input}: a braid representative $\beta$ of given link $L$.

{\bf Output}: a braid representative $\gamma$ of link $L$ with $\gamma\sim\beta$ such that either $\gamma\brord \beta$ or $\gamma= \beta$.

In each of following steps, assume that we start with a renewed braid representative $\beta=[n; b_1,\ldots, b_k]$.

Step 1: If $k=0$, then stop. Otherwise, find the $b_m$ such that $|b_m|=\min_{1\le j\le k}\{|b_j|\}$, and $|b_j|>|b_m|$ for $j=1,2,\ldots, m-1$. Replace $\beta$ with $[n; b_m, b_{m+1}, \ldots, b_k, b_{1}, \ldots, b_{k-1}]$ \emph{(Elementary relation (5))}.

Step 2: If $|b_j|>|b_1|$ for $j=2,\ldots, k$, i.e. $w(\beta_{|b_1|})=1$,  replace $\beta$ with $\beta'=[n-1; b_2-\sgn(b_2), \dots, b_k-\sgn(b_k)]$ \emph{(Elementary relation (4))}, and then go to step 1. Otherwise, go to next step.

Step 3: If there is an index $b_j$ such that $b_j=-b_{j+1}$, then replace $\beta$ with the representation $[n; b_1,\ldots, b_{j-1}, b_{j+2} \ldots,  b_{k}]$. Repeat this step until $\beta$ can not be renewed. If length reduction happens in this step, then go to step 1, otherwise go to next step.

Step 4: Having $\ltl(\beta)=q>0$, there are three cases:

Case 4.1  $|b_{q+1}| > |b_q|+1$: Let $p$ be the maximal subscript such that $|b_j|> |b_q|+1$ for $j=q+1, \ldots, p$. Replace $\beta$ with $[n; b_1,\ldots, b_q, b_{p+1}, \ldots, b_k, b_{q+1}, \ldots, b_p]$ \emph{(Repeating of several elementary relations (1) and (5))}, and then go to step 3;

Case 4.2 $|b_{q+1}| = |b_q|$: Stop;

Case 4.3 $|b_{q+1}| < |b_q|$: There must be an integer $m$ with $(1\le m\le q-1)$ such that $|b_m|=|b_{q+1}|$. Replace $\beta$ with $$\beta'=[n; b_1, \ldots, b_m, b_{m+1}, b_{q+1}, b_{m+2}, \ldots, b_q, b_{q+2}, \ldots, b_k]$$ \emph{(Elementary relation (1) and (5))}.
For the sake of simplification, the new $\beta$ is still written as $[n; b_1, \ldots, b_k]$. Now, we have $|b_j| = |b_1|+j-1$ for $j=1,2,\ldots, m+1$, and $|b_{m+2}| = |b_m|$. There are  two subcases:

Subcase 4.3.1 $\sgn(b_m) = -\sgn(b_{m+1}) = \sgn(b_{m+2})$: Stop.

Subcase 4.3.2 $\sgn(b_m) = -\sgn(b_{m+1}) = \sgn(b_{m+2})$ does not hold: Replace $\beta$ with $$[n; b_1, \ldots, b_{m-1}, \sgn(b_{m+1})|b_{m}|, \sgn(b_m)|b_{m+1}|, b_{m+3}, \ldots, b_k,
\sgn(b_{m+2})|b_{m+1}|].$$ \emph{(In fact, $\beta$ is equivalent to $$[n; b_1, \ldots, b_{m-1}, \sgn(b_{m+2})|b_{m+1}|, \sgn(b_{m+1})|b_{m}|, \sgn(b_m)|b_{m+1}|, b_{m+3}, \ldots, b_k]$$ by elementary relation (2), and hence to $$[n; \sgn(b_{m+2})|b_{m+1}|,  b_1, \ldots, b_{m-1}, \sgn(b_{m+1})|b_{m}|, \sgn(b_m)|b_{m+1}|, b_{m+3}, \ldots, b_k]$$ by elementary relation (1) and (5). The latter is equivalent to our new $\beta$ by  elementary relation (5))}.  And then go to step 1.
\end{Algorithm}
  Main features of above algorithm is summarized as follows.
\begin{Lemma}\label{output-brp}
Given any braid representative $\beta$, Algorithm~\ref{alg} terminates at a braid representative $\gamma=[m, c_1, \ldots, c_l]\sim\beta$  satisfying one of the following conditions:

(1) $\gamma=[m; - ]$, i.e. $l=0$;

(2) $\ltl(\gamma)=q>0$ and $c_{q+1}=c_q$;

(3) $\ltl(\gamma)=q>0$, $c_{q+1}=c_{q-1}$, and $\sgn(c_{q+1})=-\sgn(c_q)$.
\end{Lemma}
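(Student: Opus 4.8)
The plan is to trace Algorithm~\ref{alg} step by step and show that (a) it halts and (b) when it halts, the final braid representative $\gamma$ is of one of the three listed forms. The key structural observation is that the algorithm is designed so that two quantities control the iteration: the length $l$ of the representative (which never increases) and, for fixed length, the ordered leading tag length $\ltl$. First I would verify that every single replacement in the algorithm is a legal application of the elementary relations (1)--(5) of Definition~\ref{eq}, so that $\gamma\sim\beta$ throughout; this is largely bookkeeping but must be checked, especially for the composite moves in Case 4.1, Case 4.3, and Subcase 4.3.2, where the parenthetical remarks already spell out the decomposition into elementary moves.

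Next I would establish termination. The only places where the representative could loop are the ``go to step 1'' branches. In Step 2 and Step 3 the length $k$ strictly drops, so those branches can only occur finitely often in total. So it suffices to bound the number of iterations in which the length stays constant. With $k$ fixed, after Step 1 we have $|b_1|=\min_j|b_j|$, so $\ltl(\beta)=q\ge 1$. I would then argue that in Case 4.1 the move strictly increases the number of leading indices satisfying $|b_j|=|b_1|+j-1$ once we return through Step 3 (those indices $b_{q+1},\dots,b_p$ with $|b_j|>|b_q|+1$ are rotated to the tail, exposing either a cancelling pair, handled by Step 3 with a length drop, or an index equal to $|b_q|+1$, lengthening the tag), and similarly that Subcase 4.3.2, after the rotation, produces a representative whose $\ltl$ is strictly larger than before (this is exactly the point of inserting the conjugated pair and pushing one letter to the end). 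Since $\ltl$ is bounded above by $k$, only finitely many length-preserving iterations can occur between successive length drops. Hence the algorithm terminates.

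Then I would identify the exit points. The algorithm stops only at: Step 1 with $k=0$ (giving conclusion (1)); Case 4.2, where by definition of $\ltl$ we have $|b_{q+1}|=|b_q|$ and in fact $b_{q+1}=b_q$ must be argued from the way Case 4.2 is reached together with the sign normalisation that precedes it --- wait, more carefully: Case 4.2 as literally stated only asserts $|b_{q+1}|=|b_q|$, so I would need to check that the stronger statement $c_{q+1}=c_q$ in conclusion (2) actually holds, which I expect follows because when $|b_{q+1}|=|b_q|=|b_1|+q-1$, if the signs were opposite then $b_q=-b_{q+1}$ and Step 3 (which we passed without a length drop only because nothing was cancellable) would have removed the pair --- so in fact the cancellable-pair check in Step 3 forces $\sgn(b_{q+1})=\sgn(b_q)$ here; and Subcase 4.3.1, where after the Case 4.3 rearrangement we have $|b_{m+2}|=|b_m|$ with $\ltl$ now equal to $m+1$ and $b_{(m+1)+1}=b_{m+2}$ sharing $\sgn(b_{m+2})=\sgn(b_m)=-\sgn(b_{m+1})$, i.e. $c_{q+1}=c_{q-1}$ and $\sgn(c_{q+1})=-\sgn(c_q)$ with $q=m+1$, giving conclusion (3). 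So each of the three stopping branches corresponds to exactly one of the three listed conclusions.

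The main obstacle I anticipate is the termination/monotonicity argument: one has to pin down precisely the right ``potential function'' --- presumably the pair $(k, \text{something like } k-\ltl)$ or $(k, -\ltl)$ under lexicographic order --- and then check in each of Cases 4.1 and 4.3 (and Subcase 4.3.2) that this potential strictly decreases, taking care that the re-entry into Step~1 (which re-rotates the representative to put the minimum-absolute-value letter first) does not destroy the gain in $\ltl$. The re-rotation in Step~1 is delicate because it can in principle change which letter is ``first'', so I would want to argue that after Case 4.1 or Subcase 4.3.2 the new leading block is already in the normalised position, or that any further rotation only improves $\ltl$. Establishing that this potential is genuinely monotone --- rather than merely non-increasing with occasional ties --- is where the real work lies; the rest is a finite case check matching stopping conditions to the three output forms.
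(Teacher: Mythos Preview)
Your outline is considerably more thorough than the paper's own argument, which consists of two sentences: it defers equivalence to the parenthetical remarks in the algorithm and then simply asserts that the three termination points (Step~1 with $k=0$, Case~4.2, Subcase~4.3.1) correspond to conclusions (1), (2), (3). In particular the paper offers no termination argument whatsoever, and it does not justify why Case~4.2 yields $c_{q+1}=c_q$ rather than merely $|c_{q+1}|=|c_q|$; your observation that Step~3 was passed without cancellation, forcing the signs to agree, is exactly the missing step, and your unpacking of Subcase~4.3.1 into conclusion (3) with the new leading tag length $q=m+1$ is likewise correct and more explicit than anything in the paper.

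Where your plan needs adjustment is the termination potential. The pair $(k,-\ltl)$ will not do the job in Subcase~4.3.2: after that move the first $q$ absolute values are unchanged (positions $1,\dots,m-1$ keep $|a_1|,\dots,|a_{m-1}|$, positions $m,m+1$ get $|a_m|,|a_{m+1}|$, and positions $m+2,\dots,q$ receive the old $|a_{m+2}|,\dots,|a_q|$), so the new $\ltl$ is only guaranteed to be $\ge q$, not $>q$. What \emph{does} strictly drop is the weight $w_{|c_{q-1}|}$: one occurrence of $|c_{q-1}|$ is traded for an extra occurrence of $|c_q|$, so the representative strictly decreases under conditions (1)--(3) of Definition~\ref{new-order}. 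Thus the correct potential is the ``coarse'' part of $\brord$ --- the triple $(k,n,(w_1,w_2,\dots))$ lexicographically --- with $\ltl$ used only as a secondary counter to bound the inner loop Case~4.1 $\to$ Step~3 $\to$ Step~4 at fixed coarse level. With that correction your scheme goes through: Steps~2 and~3 drop $k$; Subcase~4.3.2 drops a weight; and repeated passes through Case~4.1 without cancellation force $\ltl$ to climb until one lands in Case~4.2 or Case~4.3.
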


\begin{proof}
Equivalency of all steps are explained in the brackets in algorithm description. Three cases of terminated braid representatives are respectively those terminated at step 1, case 4.2 of step 4 and case 4.3.1 of step 4.
\end{proof}

We are ready to show that our algorithm really works in calculating HOMFLY polynomial.
\begin{Theorem}
If a braid representative of a link $L$ is given, the calculation of HOMFLY polynomial of $L$  can be fulfilled inductively by using skein relation and Algorithm~\ref{alg}.
\end{Theorem}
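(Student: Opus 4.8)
The plan is to describe a recursive procedure built on Algorithm~\ref{alg} and to check that each invocation of the skein relation strictly decreases a well-founded complexity, so that the recursion terminates and returns the HOMFLY polynomial. First I would run Algorithm~\ref{alg} on the given braid representative $\beta$ of $L$; by Lemma~\ref{output-brp} it terminates at an equivalent $\gamma=[m;c_1,\dots,c_l]$ of one of three types. In type (1) the closed braid $\hat\gamma$ is an unlink on $m$ components, whose HOMFLY polynomial is a known power of $\bigl(-(\ell+\ell^{-1})/m\bigr)$-type factor; more precisely one peels off split unknotted components using the split-union formula that itself follows from the skein relation, so the base case is settled. The remaining two types are exactly the situations in which the first three symbols $c_{q-1},c_q,c_{q+1}$ (or $c_q,c_{q+1}$) realize a crossing that can be switched: in type (2) we have $c_{q+1}=c_q$, two consecutive equal generators, and in type (3) we have $c_{q+1}=c_{q-1}$ with opposite signs, so after the elementary moves already justified in the algorithm the relevant local picture is $\sigma_{i}^{\pm1}\sigma_{i}^{\pm1}$ flanked appropriately.

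Next I would apply the skein relation \eqref{skein-relation} at that crossing. Switching the sign of $c_q$ produces a braid representative $\gamma'$ differing from $\gamma$ only in one exponent, and smoothing it produces $\gamma_0$ obtained from $\gamma$ by deleting that one symbol, hence of strictly smaller length $l-1$. By Definition~\ref{new-order}(1), $\gamma_0\brord\gamma$ automatically. For $\gamma'$, the point of having arranged a doubled generator $\sigma_i^{\epsilon}\sigma_i^{\epsilon}$ is that after the switch we get $\sigma_i^{\epsilon}\sigma_i^{-\epsilon}$ (type (3)) or $\sigma_i^{-\epsilon}\sigma_i^{-\epsilon}$ (which feeds back through step 3 deleting two symbols) — in either case, running Algorithm~\ref{alg} again on $\gamma'$ produces a representative $\gamma''$ with $\gamma''\brord\gamma$: in the type-(3) exit the two cancelling generators are removed by step 3, dropping the length, and in the type-(2) exit the sign change turns the doubled pair into a cancelling pair, again dropping the length. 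Thus both children $\widehat{\gamma_0}$ and $\widehat{\gamma''}$ of the skein triple have braid representatives strictly $\brord$-smaller than $\gamma$, and since by the final remark of Section~2 there are only finitely many braid representatives below any given one, the recursion is well-founded and terminates. Solving the linear skein relation for $P(\hat\gamma)$ in terms of $P(\widehat{\gamma'})$ and $P(\widehat{\gamma_0})$ (the coefficient $\ell$ or $\ell^{-1}$ on $P(L_+)$ is a unit) then assembles the answer; Lemma~\ref{output-brp}'s equivalence guarantees $P(\hat\beta)=P(\hat\gamma)$ since $\hat\beta=\hat\gamma$ as links.

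The main obstacle I anticipate is the bookkeeping in the inductive step: one must verify in each of the cases (2) and (3) that applying the skein move at the designated crossing and then re-running the algorithm really does land strictly below $\gamma$ in the order $\brord$, rather than merely not-above it. This requires tracking how the weights $w_i$, the strand number, and the lexicographic tail of the array change under a single sign switch followed by the cascade of elementary moves in Algorithm~\ref{alg} — in particular one has to rule out the degenerate possibility that the switched braid re-expands (via Case 4.1 or Case 4.3) back to something of the same length and same weight vector as $\gamma$. The doubled-generator configuration at the leading tag is precisely engineered to prevent this, because the crossing change there always creates an immediately cancelling pair; making that argument airtight, case by case, against the five elementary relations is the crux. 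The remaining ingredients — the base case for unlinks and the fact that $P$ is well-defined on links, independent of the chosen braid representative — are standard and follow from \eqref{skein-relation} together with the cited Markov-move lemma.
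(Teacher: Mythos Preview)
Your overall architecture matches the paper's: run Algorithm~\ref{alg}, branch on the three exit types of Lemma~\ref{output-brp}, apply the skein relation at the crossing indexed by $c_q$, and argue that both children are strictly $\brord$-smaller so that well-foundedness of $\brord$ gives termination. Case (1) and the smoothing $\gamma_0$ (length drops by one) are fine, and your treatment of case (2) is correct: $c_q=c_{q+1}$, so the sign switch produces the adjacent pair $-c_q,c_q$, which cancels and drops the length by two.

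The gap is in case (3). You write that ``after the elementary moves already justified in the algorithm the relevant local picture is $\sigma_i^{\pm1}\sigma_i^{\pm1}$'' and that ``the crossing change there always creates an immediately cancelling pair.'' That is not what case (3) says. The exit configuration is $c_{q-1},c_q,c_{q+1}$ with $c_{q+1}=c_{q-1}$ and $\sgn(c_{q+1})=-\sgn(c_q)$; since $|c_{q-1}|=|c_q|-1$, there is \emph{no} doubled generator and no adjacent cancelling pair either before or after the sign switch. After switching $c_q\mapsto -c_q$ the local triple becomes $c_{q-1},-c_q,c_{q-1}$ with all three signs equal, and the length does \emph{not} drop under step~3. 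What the paper does instead is apply elementary relation (2) (the braid relation) to rewrite $\gamma'$ as
\[
\delta=[m;\,c_1,\dots,c_{q-2},\,-c_q,\,c_{q-1},\,-c_q,\,c_{q+2},\dots,c_l],
\]
which has the \emph{same} length $l$ as $\gamma$ but satisfies $w_{|c_{q-1}|}(\delta)=w_{|c_{q-1}|}(\gamma)-1$ while $w_i(\delta)=w_i(\gamma)$ for $i<|c_{q-1}|$. Hence $\delta\brord\gamma$ by clause (3) of Definition~\ref{new-order}. This is exactly the place where the weight refinement of the order over the one in \cite{Minimum Braids} is doing real work; your argument, which tries to get by with length reduction alone, cannot close this case. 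Your final paragraph correctly flags the bookkeeping as the crux, but the specific mechanism you propose (an immediately cancelling pair) is wrong --- replace it with the weight-drop argument via the braid relation.
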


\begin{proof}
Given a braid representative $\beta$ of link $L$,   Algorithm~\ref{alg} leads to a new braid representative $\gamma=[m; c_1, \ldots, c_l]$ for $L$, as indicated in Lemma~\ref{output-brp}.

If the first case of Lemma~\ref{output-brp} happens, we are done because the link $L$ is a trivial circle.

In the other two cases, let $\gamma'=[m; c_1, \ldots, c_{q-1}, -c_{q}, c_{q+1}, \ldots, c_l]$ be the braid representative obtained from $\gamma$ by changing the sign of $q$-th index, and let $\gamma''=[m; c_1, \ldots, c_{q-1},  c_{q+1}, \ldots, c_l]$ be the braid representative obtained from $\gamma$ by dropping the  $q$-th index. Consider the region around the crossing indicated by $c_q$ (i.e. $\sigma_{c_q}$), the corresponding three closed braids $\hat\gamma$, $\hat\gamma'$ and $\hat\gamma''$ have the relation: either $L_+=\hat\gamma$, $L_-=\hat\gamma'$ and  $L_0=\hat\gamma''$ (when $c_q>0$); or $L_+=\hat\gamma'$, $L_-=\hat\gamma$ and  $L_0=\hat\gamma''$ (when $c_q<0$), where $L_+$, $L_-$ and $L_0$ are those as illustrated in (\ref{3L}). From the skein relation, the calculation of the HOMFLY polynomial of $\hat\gamma$ is reduced down to those of $\hat\gamma'$ and $\hat\gamma''$. Thus, it is sufficient to show that as links, $\hat\gamma'$ and $\hat\gamma''$ have braid representatives which are smaller than $\gamma$ with respect to the order $\brord$.

Clearly, we have that $\gamma''\brord\gamma$ because $\gamma''$ has less indices. If $\gamma$ is in the case (2) of Lemma~\ref{output-brp}, then $c_{q+1}=c_q$. As elements in braid group $B_m$, $\gamma'$ is the same as $[m; c_1, \ldots, c_{q-1},  c_{q+2}, \ldots, c_l]$, which is smaller than $\gamma$. If $\gamma$ is in the case (3) of Lemma~\ref{output-brp}, then $c_{q+1}=c_{q-1}$, and $\sgn(c_{q+1})=-\sgn(c_q)$. The elementary relations (5) and (2) in Definition~\ref{eq} imply that $\gamma'$ is equivalent to $$\delta=[m; c_1, \ldots, c_{q-2}, -c_q, c_{q-1}, -c_q,  c_{q+2}, \ldots, c_l]$$ (cf.  relation~(\ref{relation2})), which is smaller than $\gamma$ because $\gamma$ and $\delta$ have the same number of indices, $w_i(\delta)=w_i(\gamma)$ for $i=1, 2, \ldots, |c_{q-1}|-1$ but $w_{|c_{q-1}|}(\delta)=w_{|c_{q-1}|}(\gamma)-1$.
\end{proof}

Let us illustrate our method by using a concrete example. Consider the knot with braid representative  $[4; -1,2,3,-1,3,2,-3]$. (The first knot having crossing number $6$.)
$$\begin{array}{cl}
  & P([4; -1,2,3,-1,3,2,-3]) \\
= &  P([4; -1,2,-1, 3,3,2,-3]) \\
= & -\ell^{-2}P([4; -1,-2,-1, 3,3,2,-3])-\ell^{-1}mP([4; -1,-1, 3,3,2,-3])     \\
= & -\ell^{-2}P([4; -2,-1, -2, 3,3,2,-3])-\ell^{-1}mP([4; -1,-1, 3,3,2,-3])     \\
= & -\ell^{-2}P([4; -1, -2, 3,3,2,-3,-2])-\ell^{-1}mP([4; -1,-1, 3,3,2,-3])     \\
= & -\ell^{-2}P([3; -1, 2,2,1,-2,-1])-\ell^{-1}mP([4; -1,-1, 3,3,2,-3])     \\
= & \ell^{-2}(\ell^{-2}P([3; -1, -2,2,1,-2,-1])+\ell^{-1}mP([3; -1,2,1,-2,-1]))\\ & \ \ +\ell^{-1}m(\ell^2 P([4; 1,-1, 3,3,2,-3]) + \ell mP([4; -1, 3,3,2,-3]))    \\
= & \ell^{-4}P([3; -2,-1])+\ell^{-3}mP([3; 2,1, -2, -2,-1])\\
& \ \ +\ell mP([4; 3,3,2,-3]) + m^2P([4; -1, 2,-3, 3,3])    \\
= & \ell^{-4}P([3; -1,-2])+\ell^{-3}mP([3; 1, -2, -2,-1, 2])\\
& \ \ +\ell mP([4; 2,3]) + m^2P([4; -1, 2, 3])    \\
= & \ell^{-4}P([1; -])+\ell^{-3}mP([3; 1, -2, -2,-1, 2]) +\ell mP([2; -]) + m^2P([1; -])    \\
= & (\ell^{-4}+m^2)P([1; -])+\ell mP([2; -]) \\
& \ \ +\ell^{-3}m(-\ell^2 P([3; 1, 2, -2,-1, 2])-\ell m P([3; 1,-2,-1, 2])) \\
= & (\ell^{-4}+m^2)P([1; -])+\ell mP([2; -]) -\ell^{-1}m P([3; 2])-\ell^{-2} m^2 P([3; 1,-2,-1, 2])) \\
= & (\ell^{-4}+m^2)P([1; -])+\ell mP([2; -]) -\ell^{-1}m P([3; 2])-\ell^{-2} m^2 P([3; 2,-1,-2, 2])) \\
= & (\ell^{-4}+m^2)P([1; -])+\ell mP([2; -]) -\ell^{-1}m P([2; -])-\ell^{-2} m^2 P([1; -]))
 \end{array}
$$
Since $P([1; -])=1$ and $P([2; -])= -\ell m^{-1}- \ell^{-1} m^{-1}$, we obtain that
$$ P([4; -1,2,3,-1,3,2,-3]) = \ell^4 +m^2 -\ell^{-2}m^2 -\ell^2+\ell^{-2}.
$$

\section{Computing remarks}


In order to verify our algorithm, we make a programm by using Mathematica. Thank to the listing of knots in terms of braid representatives, we calculate the HOMFLY polynomials of knots up to cross number $12$. For these $2977$ knots, the total running time is 430 second. Meanwhile, we record,  for each knot $K$, the maximal number $ND(K)$ of link diagrams during calculation.
 We obtain that
\begin{equation}\label{exp}
    \exp(\sum_{K}\frac{\ln(ND(K))}{bc(K)})=1.42,
\end{equation}
where $bc(K)$ is the braid crossing of knot $K$. The number $ND(K)$ indicates how many nodes we need to store the temporary braid representatives in calculating the HOMFLY polynomial of the knot $K$. The equation~(\ref{exp}) gives us a geometric average of growth rate of number of nodes according braid crossing if it is considered as to be exponential. The complicities is about $1.42^c$, where $c$ is the crossing number. Comparing traditional method (with complexity $2^c$), our algorithm is reasonable.

Of course, there are many methods to compute link polynomial, such as \cite{Murakami, eg, SBY}, which may have less complexities in some restricted cases. Our algorithm can be applied to arbitrary link and arbitrary link polynomial as long as skein relation is satisfied.

\end{document}